\theoremstyle{plain}
\newtheorem{theorem}{Theorem}[section]     
\newtheorem{lemma}[theorem]{Lemma}
\newtheorem{cor}[theorem]{Corollary}
\newtheorem{ex}[theorem]{Example}
\newtheorem{proposition}[theorem]{Proposition}
\newtheorem{define}[theorem]{Definition}
\theoremstyle{definition}
\newtheorem{remark}[theorem]{Remark}
\newtheorem{conv}[theorem]{Convention}
\newenvironment{proofof}[1]{\noindent{\it Proof of
#1.}}{\hfill$\square$\\\mbox{}}
\newcommand{\ord}{\mathop{\rm ord}\nolimits}
\newcommand{\rank}{\mathop{\rm rank}\nolimits}
\newcommand{\md}{\mathop{\rm mod}\nolimits}
\newcommand{\supp}{\mathop{\rm supp}\nolimits}
\begin{document}

\title[Separating Noether number]{The separating Noether number of the direct sum of several copies of a cyclic group}
\author[Barna Schefler]{Barna Schefler}
\address{E\"otv\"os Lor\'and University, 
P\'azm\'any P\'eter s\'et\'any 1/C, 1117 Budapest, Hungary} 
\email{scheflerbarna@yahoo.com}
\thanks{Partially supported by the Hungarian National Research, Development and Innovation Office,  NKFIH K 138828.}
\subjclass[2020]{Primary 13A50; Secondary 11B75, 20D60}
\keywords{Separating invariants, Noether number, zero-sum sequences, finite abelian groups, Davenport constant.}

\maketitle

\begin{abstract} 
The exact value of the separating Noether number of some finite abelian groups is determined, including the direct sums of cyclic groups of the same order. 
\end{abstract}

\section{Introduction} \label{sec:intro} 

A long-standing open question in arithmetic combinatorics is whether the \emph{Davenport constant} (i.e. the maximal length of an irreducible zero-sum sequence) over the direct sum $C_n^r$ of $r$ copies of the cyclic group of order $n$ equals $1+(n-1)r$ (see \cite{gaogeroldinger1}, \cite{glp}, \cite{girard}, \cite{mr}). Equivalently, the question is whether  
the \emph{Noether number} (i.e. the general upper bound for the degrees of the generators of the algebras of polynomial invariants) of $C_n^r$ equals $1+(n-1)r$. 
   
In recent years the study of separating systems of polynomial invariants 
(as a relaxation of the concept of generating systems) received much attention. In particular, in analogy with the Noether number, the notion of the \emph{separating Noether number} of a finite group was introduced in \cite{kohls-kraft}. In the present paper we shall determine the exact value of the separating Noether number of $C_n^r$, by 
proving certain results on zero-sum sequences over finite 
abelian groups. 

Given a finite dimensional $\mathbb{C}$-vector space $V$ endowed with an action of a finite group $G$ via linear transformations, we shall write 
$\mathbb{C}[V]^G$ for the algebra of polynomial functions $V\to \mathbb{C}$ that are constant on $G$-orbits. This is a graded subalgebra of the 
algebra $\mathbb{C}[V]$ of polynomial functions on $V$ 
(which can be identified with the $n$ variable polynomial algebra over $\mathbb{C}$ where $n=\dim_{\mathbb{C}}(V)$, equipped with the standard grading). 
It is known that if $v,w\in V$ have different $G$-orbits, then there exists a homogeneous polynomial invariant $f\in \mathbb{C}[V]^G$ of degree at most $|G|$ with $f(v)\neq f(w)$, see \cite[Theorem 3.12.1]{derksen-kemper}. 

\begin{define}\label{sepnoethnumdef}
    Write $\beta_{sep}(G,V)$ for the minimal positive integer $d$ such that for any $v,w\in V$ with different $G$-orbits, 
    there exists a homogeneous $f\in \mathbb{C}[V]^G$ of degree at most $d$ with $f(v)\neq f(w)$. The separating Noether number $\beta_{sep}(G)$ of a finite group $G$ is defined by
\begin{center}
    $\beta_{sep}(G):=\underset{V}{\sup}\{\beta_{sep}(G,V): V$ is a finite dimensional representation of  $G\}$
\end{center}
\end{define}

The above definition (which appeared first in \cite{kohls-kraft}) is motivated by the Noether number $\beta(G)$, introduced in \cite{schmid} 
as the maximal degree in a minimal homogeneous generating system of the algebra $\mathbb{C}[V]^G$, as $V$ ranges over all possible finite dimensional $G$-modules. By the famous theorem of Noether, we have 
$\beta(G)\le |G|$ (cf. \cite{noether}). Note also the obvious inequality 
\[\beta_{sep}(G)\le\beta(G).\]

In this paper the focus will be on determining the separating Noether numbers of some infinite families of abelian groups. 
Our main result is the following: 

\begin{theorem}\label{ptlns}
 For positive integers $n\ge 2$ and $r$ denote by $C_n^r$ the direct sum $C_n\oplus\cdots\oplus C_n$ of $r$ copies of the cyclic group $C_n$ of order $n$, and let $p$ be the minimal prime divisor of $n$. 
 Then we have \textbf{}
 \[\beta_{sep}(C_n^r)=\begin{cases}
        ns,&\mbox{ if }r=2s-1 \mbox{ is odd} \\
        ns+\frac{n}{p},&\mbox{ if }r=2s \mbox{ is even}.\\
    \end{cases}\]
 \end{theorem}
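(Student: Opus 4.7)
The plan follows the standard approach of converting the separating invariant problem for abelian groups into an extremal question about zero-sum sequences. Since $G = C_n^r$ is abelian, every finite-dimensional $G$-module $V$ decomposes as a sum of one-dimensional subrepresentations indexed by characters in $\widehat{G} \cong G$, so $V$ is encoded (up to isomorphism) by a multiset $T$ of elements of $G$. A coordinate monomial is $G$-invariant exactly when its exponent vector forms a zero-sum subsequence of $T$, with degree equal to length. The pure-power invariants $x_i^{\mathrm{ord}(\chi_i)}$ have degree at most $n$ and distinguish distinct supports, so the essential case is when $v, w \in V$ share support. Setting $\lambda_i := w_i/v_i$, orbit equivalence becomes the condition that $(\lambda_i)$ lies in the image of the evaluation map $G \to (\mathbb{C}^*)^T$, and $\beta_{sep}(G,V) \le d$ precisely when every nontrivial ratio tuple $\lambda$ admits a zero-sum subsequence $\alpha$ of $T$ with $|\alpha| \le d$ and $\lambda^\alpha \ne 1$.

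This reduces the theorem to evaluating a purely combinatorial invariant of $C_n^r$. For the lower bounds I would exhibit explicit sequences $T$ together with ratio tuples $\lambda$ outside the image of $G$ for which every separating zero-sum subsequence has the required length. Guided by the recursive form of the answer, $\beta_{sep}(C_n^r) = \beta_{sep}(C_n^{r-2}) + n$ with base cases $\beta_{sep}(C_n) = n$ and $\beta_{sep}(C_n^2) = n + n/p$, a natural blueprint combines independent \emph{pair} constructions on the first $r - 2$ generators of $C_n^r$ with a smaller extremal configuration on the remaining generator(s); the $n/p$ correction for even $r$ arises from an element of order $p$ lying in a $C_{n/p}$-subfactor of some $C_n$ summand, together with a diagonal character that prevents premature separation in degree at most $ns$.

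The matching upper bound is the core technical step: for every sequence $T$ over $C_n^r$ and every nontrivial character of the quotient $(\mathbb{C}^*)^T/\mathrm{Im}(G)$, one must produce a short zero-sum subsequence of $T$ evaluating nontrivially on it, of length bounded by $ns$ or $ns + n/p$ according to the parity of $r$. The expected main obstacle is the even-$r$ case: the additive correction $n/p$ reflects a delicate asymmetry involving subgroups of index $p$ in $C_n^r$, and establishing sharpness likely requires an inductive reduction decreasing $r$ by two, combined with sharp zero-sum results for subgroups such as $C_p^r$ or $C_p \oplus C_n^{r-1}$, via tools like the Chevalley--Warning theorem and explicit extremal configurations for Davenport-type constants in these subgroups.
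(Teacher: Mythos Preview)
Your proposal correctly sets up the translation to zero-sum combinatorics, but what follows is a plan rather than a proof, and the plan for the upper bound has a genuine gap. The recursion $\beta_{sep}(C_n^r)=\beta_{sep}(C_n^{r-2})+n$ is the statement to be \emph{proved}; you give no mechanism that, from an arbitrary multiset $T$ over $C_n^r$ and an arbitrary non-image ratio tuple $\lambda$, peels off two coordinates and produces a separating zero-sum subsequence shorter by exactly $n$. The appeal to Chevalley--Warning is also misplaced: that tool controls the Davenport constant of $p$-groups via monoid-irreducibility, whereas $\beta_{sep}$ is governed by a different extremal notion (integral indecomposability in the lattice generated by the block monoid), and it is unclear how a zero-counting argument over $\mathbb{F}_p$ would yield the sharp constant $ns+n/p$ for composite $n$. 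Finally, you work with arbitrary multisets $T$, whereas the decisive first step is a Helly-type reduction (due to Domokos) that $\beta_{sep}(G)$ is already attained on block monoids $\mathcal{B}(g_1,\dots,g_k)$ built from at most $\rank(G)+1$ \emph{distinct} elements; without this, the upper-bound problem is much harder than it needs to be.

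The paper's argument is quite different and uses neither induction on $r$ nor Chevalley--Warning. After the reduction to $k\le r+1$ distinct $g_i$, the odd-$r$ upper bound is almost immediate from the complementer identity $\mathsf{m}+\mathsf{m}^*=\sum_i\ord(g_i)\mathsf{e}_i$: a group atom $\mathsf{m}$ longer than every $\ord(g_i)$ must satisfy $|\mathsf{m}|\le|\mathsf{m}^*|$, hence $|\mathsf{m}|\le\lfloor (r+1)n/2\rfloor=ns$. For even $r$ the paper replaces your proposed induction by a short number-theoretic trick: multiply a putative group atom $\mathsf{m}$ coordinatewise by a unit $\ell$ modulo $n$ chosen (via the Chinese Remainder Theorem) so that the reduced element $\mathsf{m}_\ell$ has length $\equiv d:=\gcd(|\mathsf{m}|,n)\pmod n$; expressing $\mathsf{m}$ through $\mathsf{m}_\ell$ and through $\mathsf{m}_\ell^*$ then traps $|\mathsf{m}_\ell|$ in the interval $(ns+n/p,\,ns+n-n/p)$, forcing $d$ to be a divisor of $n$ strictly between $n/p$ and $n-n/p$, which is impossible. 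The lower bounds come from explicit chain-like tuples $g_1,\dots,g_{r+1}$ for which every atom of non-full support has all coordinates divisible by $n$ (resp.\ by $p$), so a specific full-support atom of length $ns$ (resp.\ $ns+n/p$) is a group atom.
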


In fact Theorem~\ref{ptlns} is a special case of our 
Theorem~\ref{ptlci} and Theorem~\ref{psci}. Note that in sharp contrast with Theorem~\ref{ptlns}, the exact values of the Noether numbers (equivalently Davenport constants) of the groups $C_n^r$ are not known, despite much interest in determining them (\cite{gaogeroldinger1}, \cite{glp}, \cite{girard}, \cite{mr}).\par
Recall that the \emph{Davenport constant} 
$\mathsf{D}(G)$ of a finite abelian group $G$ is the maximal length of an atom (i.e. irreducible element) in the monoid of zero-sum sequences over $G$.
The most common examples of infinite families of abelian groups for which the exact values of the Davenport constants are known are the abelian groups of rank two and abelian $p$-groups (for such groups $G$ we have $\mathsf{D}(G)=\mathsf{d}^*(G)+1$) 
(see \eqref{eq:d*G} for the definition of $\mathsf{d}^*(G)$). The determination of the separating Noether number of 
these groups seems to be more laborious or difficult than the proof of Theorem~\ref{ptlns}. 
In a parallel paper \cite{schefler} we are discussing the exact value of the separating Noether number of abelian groups of rank two and three.\par

 It has long been known that for an abelian group, the values of the Noether number and the Davenport constant coincide:
\begin{equation}\label{b=d}
    \beta(G)=\mathsf{D}(G)
\end{equation}
(see \cite{schmid}; see also  \cite{cziszter-domokos-geroldinger} for further discussion of this connection). The separating Noether number of a finite abelian group $G$ also has an interpretation in terms of zero-sum sequences over $G$ that we shall explain in Section~\ref{sec:separating-Davenport}.  In particular, in this part we define the concept of \emph{group atom} (see  Definition \ref{groupatomdef}), and recall its role in the study of the separating Noether number. In Section \ref{sec:upperbound} we give a general upper bound for the separating Noether number of an abelian group. In Section \ref{sec:groupatoms} we prove some lemmas that will help us in detecting group atoms. By using these lemmas we construct certain group atoms in Section \ref{sec:lowerbound}. 
In Section \ref{sec:results} we 
deduce Theorem~\ref{ptlci} and 
Theorem~\ref{psci}, which give the separating 
Noether number for a class of finite abelian groups, including the groups $C_n^r$.  Theorem~\ref{ptlns} follows therefore as a special case. \par

\section{The separating Noether number of an abelian group}\label{sec:separating-Davenport}
Let $g_1,\dots,g_k$ be distinct elements of the (additively written) finite abelian group $G$. Then 
\begin{center}
    $\mathcal{G}(g_1,\dots,g_k):=\{[m_1,\dots,m_k]\in\mathbb{Z}^k:\sum m_ig_i=0\in G\}$\\
\end{center}
is a subgroup of the additive group of $\mathbb{Z}^k$. Its submonoid 
\begin{center}
    $\mathcal{B}(g_1,\dots,g_k):=\mathbb{N}^k\cap \mathcal{G}(g_1,\dots,g_k)$
\end{center}   
is a finitely generated Krull monoid, playing an essential role in the 
multiplicative ideal theory of Krull domains, see \cite[Chapter 2]{ghk}). These monoids are also called \emph{block monoids}, see for example \cite[Definition 2.5.5]{ghk}.  
It is not hard to see that  $\mathcal{B}(g_1,\dots,g_k)$ generates $\mathcal{G}(g_1,\dots,g_k)$ as a group. \par 
An element of the monoid $\mathcal{B}(g_1,\dots,g_k)$ is an \emph{atom}, if it can not be written as the sum of two non-zero elements of the monoid. The \emph{length} of an element $\mathsf{m}=[m_1,\dots,m_k]\in\mathcal{B}(g_1,\dots,g_k)$ is  $|\mathsf{m}|=\sum_{i=1}^k m_i$. 
Denote by $\mathsf{D}(\mathcal{B}(g_1,\dots,g_k))$ the maximal length of an atom in the monoid $\mathcal{B}(g_1,\dots,g_k)$. The support of an element is defined as:
\begin{center}
    $\supp(\mathsf{m})=\{i\in\{1,\dots,k\}: m_i\neq 0\}$
\end{center}
The size of the support is $|\supp(\mathsf{m})|$. Let $\mathsf{e_i}$ be the $i^{th}$ standard basis vector of $\mathbb{Z}^k$, and $\ord(g_i)$ the order of $g_i$ in $G$. Of course, $\ord(g_i)\mathsf{e_i}\in \mathcal{B}(g_1,\dots,g_k)$, so $\sum_{i=1}^k \ord(g_i)\mathsf{e_i}\in\mathcal{B}(g_1,\dots,g_k)$. If $0\leq m_i\leq\ord(g_i)$ for any $i$, then we define the \emph{complementer} of $\mathsf{m}$: 
\begin{equation}\label{eq:m*}
\mathsf{m^*}=\sum_{i=1}^k \ord(g_i)\mathsf{e_i}-\mathsf{m}\in\mathcal{B}(g_1,\dots,g_k)\end{equation} 

In the special case when $g_1,\dots,g_k$ is an  enumeration of all the elements of $G$, then by slight abuse of notation we write $\mathcal{B}(G):=\mathcal{B}(g_1,\dots,g_k)$ and 
$\mathsf{D}(G):=\mathsf{D}(\mathcal{B}(G))$; 
clearly, the isomorphism type of the monoid endowed with the length function does not depend on the ordering of the elements of $G$.  \par

\begin{remark}
The monoid $\mathcal{B}(G)$ can be interpreted as the monoid of zero-sum sequences over $G$. 
Namely, associate with $\mathsf{m}\in \mathcal{B}(G)=\mathcal{B}(g_1,\dots,g_k)$ the sequence over $G$ containing $g_i\in G$ with multiplicity $m_i$. 
For a survey on zero-sum sequences over abelian groups see  \cite{d^*}, \cite[Chapter 5]{ghk}, \cite[Chapter 10]{grynkiewicz}. 
\end{remark}

\begin{conv}
Throughout this article the following convention will be used: $G$ will stand for the abelian group $G=C_{n_1}\oplus C_{n_2}\oplus \dots \oplus C_{n_r}$ where $2\le n_r\mid\dots\mid n_2\mid n_1$, and $r=2s$ or $2s-1$, depending on its parity, $g_1,\dots,g_k$ are fixed distinct elements of $G$, and $\mathcal{B}(g_1,\dots,g_k)$ is the corresponding block monoid. Note that here $r$ is the \emph{rank} (i.e. the minimal number of generators), and $n_1=\exp(G)$ is the \emph{exponent} (i.e. the least common multiple of the orders  of the elements) of the group $G$. 
\end{conv}
\noindent
Setting  
\begin{equation}\label{eq:d*G}
    \mathsf{d}^*(G):=\sum_{i=1}^r(n_i-1), 
\end{equation}
we have the inequality 
\begin{equation}\label{lowerbound}
    \mathsf{d}^*(G)+1\leq\mathsf{D}(G)
\end{equation}
(see for example Section 3 in \cite{d^*}).
Results from arithmetic combinatorics show that for p-groups (see \cite{p-group}) or for groups of rank two (see \cite{rank2}) equality holds in \eqref{lowerbound}. There are some infinite families of abelian groups for which the inequality is known to be strict 
(see for example \cite{geroldinger-schneider}). In general it is not known when equality holds.\par
In order to state the analogues of \eqref{b=d} for the separating 
Noether number we need 
the following definition: 
\begin{define}\label{groupatomdef}
An element $\mathsf{m}\in\mathcal{B}(g_1,\dots,g_k)$ is called a \emph{group atom in 
$\mathcal{B}(g_1,\dots,g_k)$}  
if $\mathsf{m}$ can not be written as an \emph{integral} linear combination of elements 
of $\mathcal{B}(g_1,\dots,g_k)$ that have length strictly smaller than $|\mathsf{m}|$. 
\end{define}

\begin{remark}
     If $\mathsf{m}$ is a group atom in the monoid $\mathcal{B}(g_1,\dots,g_k)$, then it is also an atom in $\mathcal{B}(g_1,\dots,g_k)$, hence in particular we have the inequalities $m_i\leq \ord(g_i)$ for all $i=1,\dots,k$. So its complementer,  $\mathsf{m}^*\in \mathcal{B}(g_1,\dots,g_k)$ is defined  as in \eqref{eq:m*}.
\end{remark}
 
The relation between the separating Noether number of an abelian group and the theory of zero-sum sequences  
is established in \cite{domokos_abelian} (building on \cite{domokos_typical} and \cite{domokos-szabo}) by the following statement: 

\begin{lemma}\label{md2}  {\rm{(Corollary 2.6. in \cite{domokos_abelian})}}
The number $\beta_{sep}(G)$ is the maximal length of a group atom in $\mathcal{B}(g_1,\dots,g_k)$, where $\{g_1,\dots,g_k\}$ ranges over all subsets of size $k\leq \rank(G)+1$ of the abelian group $G$.
\end{lemma}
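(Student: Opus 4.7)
The plan is to translate the invariant-theoretic definition of $\beta_{sep}(G)$ into combinatorics of the block monoid, using the weight-space decomposition of representations of the abelian group $G$. Since $G$ is finite abelian and the base field is $\mathbb{C}$, every finite-dimensional $G$-module $V$ decomposes into one-dimensional $G$-eigenspaces. One may thus choose coordinates $x_1,\dots,x_k$ on $V$ with respect to which $G$ acts diagonally via characters $g_1,\dots,g_k\in\hat G\cong G$. A monomial $x^{\mathsf{m}}=x_1^{m_1}\cdots x_k^{m_k}$ is $G$-invariant precisely when $\mathsf{m}\in\mathcal{B}(g_1,\dots,g_k)$, and such monomials form a $\mathbb{C}$-basis of $\mathbb{C}[V]^G$. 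Consequently, if $v,w\in V$ lie in distinct $G$-orbits they are separated by some monomial invariant, and $\beta_{sep}(G,V)$ equals the smallest $|\mathsf{m}|$ for which $x^{\mathsf{m}}$ separates some pair of orbits in $V$.

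The core step is to show that $x^{\mathsf{m}}$ is an \emph{essential} separating invariant (i.e.\ not replaceable by invariants of strictly smaller degree) exactly when $\mathsf{m}$ is a group atom. Indeed, if $\mathsf{m}=\sum a_i\mathsf{n}_i$ with $\mathsf{n}_i\in\mathcal{B}$, $a_i\in\mathbb{Z}$, and $|\mathsf{n}_i|<|\mathsf{m}|$, then on the open torus where no coordinate vanishes one has the Laurent identity $x^{\mathsf{m}}=\prod(x^{\mathsf{n}_i})^{a_i}$; a limiting/valuation argument extends this to show that the values of $x^{\mathsf{m}}$ on any $v,w\in V$ are determined by the values of the $x^{\mathsf{n}_i}$. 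Conversely, when $\mathsf{m}$ is a group atom, I would construct in the representation of weights $g_1,\dots,g_k$ explicit points $v,w$ — typically a pair with equal coordinate moduli but carefully chosen phases — that are distinguished by $x^{\mathsf{m}}$ but by no invariant of strictly smaller degree. Together these statements yield $\beta_{sep}(G)=\max\{|\mathsf{m}|:\mathsf{m}\text{ is a group atom in some }\mathcal{B}(g_1,\dots,g_k)\}$.

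Finally, I would restrict the supremum to subsets of size $k\le\rank(G)+1$ in two steps. First, any group atom $\mathsf{m}\in\mathcal{B}(g_1,\dots,g_k)$ restricts to a group atom of the same length in the sub-block-monoid $\mathcal{B}(g_i:i\in\supp(\mathsf{m}))$, so without loss of generality $k=|\supp(\mathsf{m})|$. Second, if $|\supp(\mathsf{m})|>\rank(G)+1$, then the relation lattice $\mathcal{G}(g_1,\dots,g_k)$ admits enough independent short generators coming from linear dependencies among the $g_i$ that they can be combined to express $\mathsf{m}$ as an integral combination of strictly shorter elements of $\mathcal{B}$, contradicting the group-atom property.

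The most delicate point will be the boundary behaviour in the central equivalence: the Laurent identity $x^{\mathsf{m}}=\prod(x^{\mathsf{n}_i})^{a_i}$ only holds where all coordinates are nonzero, so concluding that a non-group-atom $\mathsf{m}$ is truly redundant for separation on all of $V$ requires controlling $x^{\mathsf{m}}$ on the coordinate hyperplanes as well. This is precisely what the typical-stabilizer and typical-separating-invariant machinery developed in \cite{domokos_typical} and \cite{domokos-szabo} is designed to handle, and it is this input that Corollary 2.6 of \cite{domokos_abelian} packages to deliver the lemma.
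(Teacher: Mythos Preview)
The paper does not give a proof of this lemma at all: it is simply quoted as Corollary~2.6 of \cite{domokos_abelian}, with the surrounding sentence noting that the result builds on \cite{domokos_typical} and \cite{domokos-szabo}. So there is nothing in the present paper to compare your argument against; your proposal is effectively a sketch of the proof in the cited reference.

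As such a sketch, the first two paragraphs are on target: the diagonalisation of $G$-modules, the identification of invariant monomials with $\mathcal{B}(g_1,\dots,g_k)$, and the equivalence between ``$\mathsf{m}$ is a group atom'' and ``$x^{\mathsf{m}}$ is not redundant for separation'' are exactly the ingredients used in \cite{domokos_abelian}, and you correctly flag the coordinate-hyperplane issue and defer it to the typical-separating-invariant machinery.

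The one place where your outline diverges from the actual argument is the reduction to $k\le\rank(G)+1$. Your proposed mechanism --- that when $|\supp(\mathsf{m})|>\rank(G)+1$ the relation lattice $\mathcal{G}(g_1,\dots,g_k)$ has enough short generators to express $\mathsf{m}$ as an integral combination of strictly shorter elements of $\mathcal{B}$ --- is not how the bound is obtained, and it is not clear that such a purely lattice-combinatorial argument goes through. In \cite{domokos_abelian} the bound comes instead from the Helly-dimension result of \cite{domokos-szabo}: if two points $v,w\in V$ are separated by some invariant, they are already separated after projecting to a suitable $(\rank(G)+1)$-dimensional coordinate subspace, so one only ever needs block monoids on at most $\rank(G)+1$ weights. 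This is a statement about pairs of points and their stabilisers, not about decomposing a single group atom. Your closing sentence does invoke \cite{domokos_typical} and \cite{domokos-szabo}, but you attach that reference to the boundary issue rather than to the rank bound; for an accurate reconstruction of the proof, that is where it belongs.
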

\noindent 
 For related results on not necessarily finite abelian groups see \cite{domokos:BLMS}. 

\section{A general upper bound}\label{sec:upperbound}
\begin{lemma}\label{m*geqm}
Let $\mathsf{m}$ be a group atom in the monoid $\mathcal{B}(g_1,\dots,g_k)$ such that $|\mathsf{m}|> \max\{\ord(g_i)\mbox{ for } i=1,\dots,k\}$. Then we have the following inequalities: 
\begin{itemize} 
\item[(i)]  
$|\mathsf{m}|\le |\mathsf{m}^*|$. 
\item[(ii)] $2|\mathsf{m}|\le \sum_{i=1}^k\ord(g_i)$. 
\end{itemize}
\end{lemma}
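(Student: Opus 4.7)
The plan is to observe that part (ii) is an immediate consequence of part (i), and that part (i) follows directly from the definition of a group atom by exhibiting an explicit short integral representation of $\mathsf{m}$.

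For (ii), note that from the definition \eqref{eq:m*} we have $\mathsf{m} + \mathsf{m}^* = \sum_{i=1}^k \ord(g_i)\mathsf{e_i}$, so taking lengths yields $|\mathsf{m}| + |\mathsf{m}^*| = \sum_{i=1}^k \ord(g_i)$. Hence once (i) is established, $2|\mathsf{m}| \le |\mathsf{m}| + |\mathsf{m}^*| = \sum_{i=1}^k \ord(g_i)$ is automatic.

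For (i), I would argue by contradiction: suppose $|\mathsf{m}^*| < |\mathsf{m}|$. The same identity as above rearranges to the integral relation
\[
\mathsf{m} = \sum_{i=1}^k \ord(g_i)\mathsf{e_i} - \mathsf{m}^*.
\]
Each summand $\ord(g_i)\mathsf{e_i}$ lies in $\mathcal{B}(g_1,\dots,g_k)$ and has length $\ord(g_i)$, which by the hypothesis $|\mathsf{m}| > \max_i \ord(g_i)$ is strictly less than $|\mathsf{m}|$. By the remark following Definition~\ref{groupatomdef}, $\mathsf{m}^*$ is also in $\mathcal{B}(g_1,\dots,g_k)$, and by the assumption for contradiction its length is also strictly less than $|\mathsf{m}|$. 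So the displayed equation exhibits $\mathsf{m}$ as an integral linear combination (with coefficients $\pm 1$) of elements of $\mathcal{B}(g_1,\dots,g_k)$ each of length strictly smaller than $|\mathsf{m}|$, contradicting that $\mathsf{m}$ is a group atom.

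There is no real obstacle here: the whole argument is essentially a one-line manipulation of the defining identity for the complementer, the only subtlety being the verification that every term in the integral combination is itself in $\mathcal{B}(g_1,\dots,g_k)$, which uses the hypothesis $|\mathsf{m}| > \max_i \ord(g_i)$ and the earlier observation that $m_i \le \ord(g_i)$ holds for group atoms.
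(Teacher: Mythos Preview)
Your proof is correct and follows exactly the paper's approach: the paper's own proof consists of the single sentence ``Both (i) and (ii) follow from the equality $\mathsf{m}=\sum_{i=1}^k \ord(g_i)\mathsf{e_i}-\mathsf{m}^*$,'' and you have simply spelled out the details of how that equality, together with the group-atom hypothesis and the length bound $|\mathsf{m}|>\max_i\ord(g_i)$, yields the two inequalities.
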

\begin{proof} Both (i) and (ii) follow from the equality 
     $\mathsf{m}=\sum_{i=1}^k \ord(g_i)\mathsf{e_i}-\mathsf{m}^*$. 
     \end{proof}

\begin{cor}\label{nkappa}
For any finite abelian group $G$ we have the inequality
\[\beta_{sep}(G)\leq\left\lfloor\frac{\exp(G)\cdot (\rank(G)+1)}{2}\right\rfloor.\]  
\end{cor}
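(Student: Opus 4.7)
The plan is to combine Lemma~\ref{md2} with Lemma~\ref{m*geqm}(ii). By Lemma~\ref{md2}, there exist distinct elements $g_1,\dots,g_k\in G$ with $k\leq \rank(G)+1$ and a group atom $\mathsf{m}\in\mathcal{B}(g_1,\dots,g_k)$ of length $|\mathsf{m}|=\beta_{sep}(G)$. I would then split into two cases according to whether the length of $\mathsf{m}$ exceeds $\max\{\ord(g_i):i=1,\dots,k\}$, so that I can or cannot invoke Lemma~\ref{m*geqm}.

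In the ``large'' case $|\mathsf{m}|>\max\{\ord(g_i)\}$, Lemma~\ref{m*geqm}(ii) gives
\[2|\mathsf{m}|\ \leq\ \sum_{i=1}^k\ord(g_i)\ \leq\ k\cdot \exp(G)\ \leq\ (\rank(G)+1)\cdot\exp(G),\]
using that each $\ord(g_i)$ divides (hence is at most) $\exp(G)$ and that $k\leq\rank(G)+1$. Since $|\mathsf{m}|$ is a positive integer, the inequality can be tightened to $|\mathsf{m}|\leq \lfloor\exp(G)(\rank(G)+1)/2\rfloor$, as required.

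In the ``small'' case $|\mathsf{m}|\leq\max\{\ord(g_i)\}$, we simply have $|\mathsf{m}|\leq\exp(G)$. For any nontrivial $G$ one has $\rank(G)\geq 1$, hence $\exp(G)\leq \lfloor\exp(G)(\rank(G)+1)/2\rfloor$ and the bound again holds; the trivial group case is vacuous.

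There is no real obstacle here: both cases reduce immediately to Lemma~\ref{m*geqm}(ii) and the elementary bounds $\ord(g_i)\leq\exp(G)$ and $k\leq\rank(G)+1$. The only subtlety worth flagging is that the assumption $|\mathsf{m}|>\max\ord(g_i)$ required by Lemma~\ref{m*geqm} forces the short auxiliary ``small'' case, which is why the proof is naturally written as a dichotomy.
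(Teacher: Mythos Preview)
Your proposal is correct and follows essentially the same approach as the paper: both argue via the dichotomy $|\mathsf{m}|\leq\max_i\ord(g_i)$ versus $|\mathsf{m}|>\max_i\ord(g_i)$, invoking Lemma~\ref{m*geqm}(ii) in the latter case and the trivial bound $\ord(g_i)\leq\exp(G)$ together with $k\leq\rank(G)+1$ from Lemma~\ref{md2} throughout. The only cosmetic difference is that you fix a group atom realizing $\beta_{sep}(G)$ at the outset, whereas the paper bounds an arbitrary group atom and then takes the supremum.
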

\begin{proof}
Let $\mathsf{m}$ be a group atom in $\mathcal{B}(g_1,\dots,g_k)$. If $|\mathsf{m}|\leq \max\{\ord(g_i)\mbox{ for } i=1,\dots,k\}$, then $|\mathsf{m}|\leq\frac{\exp(G)\cdot (\rank(G)+1)}{2}$, since $\ord(g_i)\leq \exp(G)$, and $1\leq\rank(G)$.\par

If $|\mathsf{m}|> \max\{\ord(g_i)\mbox{ for } i=1,\dots,k\}$, then $|\mathsf{m}|\leq \frac{\sum_{i=1}^k\ord(g_i)}{2}$ by Lemma \ref{m*geqm} (ii). Lemma \ref{md2} implies that we need to consider only the case $k\leq \rank(G)+1$, while $\ord(g_i)\leq \exp(G)$ by definition. So the maximal length of the group atoms in the monoids given by Lemma \ref{md2} is bounded from above by $\frac{\exp(G)\cdot (\rank(G)+1)}{2}$ in any case. So $\beta_{sep}(G)\leq\left\lfloor\frac{\exp(G)\cdot (\rank(G)+1)}{2}\right\rfloor$, since $\beta_{sep}(G)$ must be an integer.
\end{proof}
\begin{remark} 
The bound for $\beta_{sep}(G)$ in Corollary~\ref{nkappa} is useful when the size of more than half of the cyclic direct summands of $G$ equals the exponent of $G$. Moreover, the inequality in Corollary~\ref{nkappa} is sharp for the groups $C_n^{2s-1}$ for all $n$ and for the groups $C_n^{2s}$ with even $n$ by  
Theorem~\ref{ptlns}.
\end{remark} 

\section{Detecting group atoms}\label{sec:groupatoms}
Next we give a sufficient condition for an element in a block monoid to be a group atom.
\begin{lemma}\label{groupatomL}
Let $\mathsf{m}=[m_1,\dots,m_k]\in\mathcal{B}(g_1,\dots,g_k)$ be an element with $|\supp(\mathsf{m})|=k$, such that 
for a fixed $i\in\{1,\dots,k\}$ and an integer $d$, we have that  
\begin{itemize}
\item[(i)] $d$ 
 divides the $i^{th}$ entry of each atom in $\mathcal{B}(g_1,\dots,g_k)$ whose support is of size strictly smaller than $k$; 
 \item[(ii)] $d$ does not divide $m_i$; 
 \item[(iii)] $|\mathsf{m}|$ is minimal among the lengths of the atoms in $\mathcal{B}(g_1,\dots,g_k)$ with support of size $k$. 
 \end{itemize} 
 Then $\mathsf{m}$ is a group atom in  $\mathcal{B}(g_1,\dots,g_k)$. 
\end{lemma}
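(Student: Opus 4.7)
The plan is to assume for contradiction that $\mathsf{m}$ is not a group atom, so that $\mathsf{m}=\sum_{j}c_j\mathsf{a}_j$ for some integers $c_j\in\mathbb{Z}$ and some elements $\mathsf{a}_j\in\mathcal{B}(g_1,\dots,g_k)$ with $|\mathsf{a}_j|<|\mathsf{m}|$, and to derive a divisibility contradiction on the $i^{th}$ coordinate using the three hypotheses.

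First I would reduce to the case where each $\mathsf{a}_j$ appearing in the expression is an \emph{atom} of the block monoid. Since $\mathcal{B}(g_1,\dots,g_k)$ is generated as a monoid by its atoms, every $\mathsf{a}_j$ decomposes as a sum (with non-negative integer multiplicities) of atoms, and every such atom has length at most $|\mathsf{a}_j|<|\mathsf{m}|$. Substituting these decompositions into the expression for $\mathsf{m}$ and collecting terms, I obtain an integral linear combination $\mathsf{m}=\sum_{l}c'_l\mathsf{b}_l$, where each $\mathsf{b}_l$ is an atom of $\mathcal{B}(g_1,\dots,g_k)$ with $|\mathsf{b}_l|<|\mathsf{m}|$.

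Next I would invoke hypothesis (iii): since $|\mathsf{m}|$ is minimal among lengths of atoms with full support of size $k$, every atom $\mathsf{b}_l$ in the combination, having length strictly less than $|\mathsf{m}|$, must have support of size strictly smaller than $k$. Hypothesis (i) then applies to each $\mathsf{b}_l$, so the $i^{th}$ entry $(\mathsf{b}_l)_i$ is divisible by $d$ for every $l$. Taking the $i^{th}$ coordinate of the equation $\mathsf{m}=\sum_{l}c'_l\mathsf{b}_l$ yields
\[m_i=\sum_{l}c'_l(\mathsf{b}_l)_i\in d\mathbb{Z},\]
which contradicts hypothesis (ii) that $d\nmid m_i$. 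Hence $\mathsf{m}$ is a group atom.

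There is no real obstacle here: the only mild subtlety is the first reduction, ensuring that negative coefficients in the hypothetical combination do not allow atoms of length $\geq|\mathsf{m}|$ to sneak in on the right-hand side. This is settled by the observation that decomposing an element of length $<|\mathsf{m}|$ into atoms produces only atoms of length $<|\mathsf{m}|$, after which the three hypotheses interact in a straightforward coordinate argument.
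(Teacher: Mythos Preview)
Your proof is correct and follows the same contradiction-via-divisibility route as the paper. The paper's version is terser: it asserts directly that the elements of lower length in the assumed combination have support strictly smaller than $k$, whereas you first decompose each such element into atoms before applying (iii). Your extra reduction step is the right way to make the argument airtight, since hypothesis (iii) speaks only of atoms; the paper is implicitly relying on the same observation but skips spelling it out.
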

\begin{proof}
Suppose for contradiction, that $\mathsf{m}$ can be written as a linear combination of elements of lower length. By the assumptions, the size of their support is strictly smaller than $k$, so their $i^{th}$ entries are divisible by $d$. Of course each linear combination of them does so. This is a contradiction, since $d$ does not divide $m_i$. So $\mathsf{m}$ is indeed a group atom.
\end{proof}

We continue with an elementary number theoretic lemma.
\begin{lemma}\label{lexists}
Let $\alpha,\beta,\gamma$ be positive integers with $\gcd(\alpha,\beta)=1$. Then there exists an $\ell\in\{1,2,\dots,\alpha\gamma-1\}$, for which $\gcd(\ell,\alpha\gamma)=1$ and $\ell\beta\equiv 1$ $\md$ $\alpha$ hold.
\end{lemma}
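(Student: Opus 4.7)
The plan is to apply the Chinese Remainder Theorem after a careful splitting of $\gamma$. Since $\gcd(\alpha,\beta)=1$, I would first pick $\ell_0\in\{1,\dots,\alpha\}$ with $\ell_0\beta\equiv 1\pmod{\alpha}$; automatically $\gcd(\ell_0,\alpha)=1$. The condition $\ell\beta\equiv 1\pmod{\alpha}$ is equivalent to $\ell\equiv\ell_0\pmod{\alpha}$, so the task reduces to finding a representative of this residue class that is coprime to $\gamma$ and lies in $\{1,\dots,\alpha\gamma-1\}$.

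The key step is to factor $\gamma=\gamma_1\gamma_2$, where $\gamma_1$ is the largest divisor of $\gamma$ whose prime factors all divide $\alpha$, and $\gamma_2=\gamma/\gamma_1$. By construction $\gcd(\alpha,\gamma_2)=1$, so the Chinese Remainder Theorem produces an $\ell\in\{1,\dots,\alpha\gamma_2\}$ satisfying $\ell\equiv\ell_0\pmod{\alpha}$ and $\ell\equiv 1\pmod{\gamma_2}$.

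To conclude, I would check coprimality prime by prime: for every prime $p\mid\alpha$ we have $\ell\equiv\ell_0\pmod{p}$ and $p\nmid\ell_0$, so $p\nmid\ell$; for every prime $p\mid\gamma_2$ we have $\ell\equiv 1\pmod{p}$, so $p\nmid\ell$. Every prime divisor of $\alpha\gamma$ falls into one of these two categories by the definition of $\gamma_1$ and $\gamma_2$, whence $\gcd(\ell,\alpha\gamma)=1$. Together with $\ell\le\alpha\gamma_2\le\alpha\gamma$, coprimality forces $\ell<\alpha\gamma$ (in the only nondegenerate case $\alpha\gamma>1$), so $\ell\in\{1,\dots,\alpha\gamma-1\}$ as required. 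There is no real obstacle here; the only subtlety is to recognise that a direct application of CRT to $\alpha$ and $\gamma$ would fail when $\gcd(\alpha,\gamma)>1$, and that the $\gamma_1$-primes are handled for free since the congruence modulo $\alpha$ already guarantees coprimality to them.
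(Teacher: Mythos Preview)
Your proof is correct and follows essentially the same route as the paper: both pick an inverse $\ell_0$ (the paper calls it $\hat\beta$) of $\beta$ modulo $\alpha$, then use the Chinese Remainder Theorem to adjust it so as to be congruent to $1$ modulo the primes of $\gamma$ not already dividing $\alpha$. The only cosmetic difference is that the paper works directly with the distinct primes $p_1,\dots,p_t$ of $\gamma$ not dividing $\alpha$ (so the CRT modulus is $\alpha p_1\cdots p_t$), whereas you bundle them into the factor $\gamma_2$; since $p_1\cdots p_t$ is the radical of your $\gamma_2$, the two arguments are interchangeable.
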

\begin{proof}
Here $\gcd(\alpha,\beta)=1$ implies that there exists $\hat{\beta}\in\{1,2,\dots,\alpha-1\}$, such that $\hat{\beta}\beta\equiv 1$ $\md$ $\alpha$, (of course, $\gcd(\alpha,\hat{\beta})=1$). Let $p_1,\dots,p_t$ be the distinct prime divisors of $\gamma$ that do not divide $\alpha$. 
By the Chinese Remainder Theorem, the system $x\equiv \hat{\beta}$  $\md$ $\alpha$, $x\equiv 1$  $\md$ $p_i$, $i=1,\dots,t$ has a unique solution $\ell$ in  $\{1,\dots,\alpha p_1\dots p_t-1\}$ contained in $\{1,2,\dots,\alpha\gamma-1\}$. Clearly, $\ell$ is coprime to $\alpha\gamma$ as well. 
\end{proof}

Using Lemma \ref{lexists} we can prove the following result:

\begin{lemma}\label{kn+gcd}
Consider the case when $r=2s$ and $n_{s+1}=\dots=n_1$. Let $\mathsf{m}=[m_1,\dots,m_{2s+1}]\in\mathcal{B}(g_1,\dots,g_{2s+1})$, for elements $g_1,\dots,g_{2s+1}\in G$ satisfying the following conditions:
\begin{itemize}
    \item[(i)] $\ord(g_i)=n_1$ for $i\in\{1,\dots,2s\}$ 
    \item[(ii)] $|\mathsf{m}|>sn_1+\frac{\ord(g_{2s+1})}{p}$, where $p$ is the minimal prime divisor of $\ord(g_{2s+1})$.    
\end{itemize}
Then $\mathsf{m}$ is not a group atom in $\mathcal{B}(g_1,\dots,g_{2s+1})$.
\end{lemma}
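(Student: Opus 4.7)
My plan is to show that $\mathsf{m}$ lies in the sublattice of $\mathcal{G}(g_1,\dots,g_{2s+1})$ generated by elements of $\mathcal{B}(g_1,\dots,g_{2s+1})$ of length strictly less than $|\mathsf{m}|$. A preliminary reduction will let me assume $m_i>0$ for every $i$: if some $m_j=0$, then $\mathsf{m}$ descends (by ignoring position $j$) to a group atom in the block monoid on the remaining $\le 2s$ generators, and applying Lemma~\ref{m*geqm}(ii) there forces $|\mathsf{m}|\le sn_1$ (since every $\ord(g_i)\le n_1$ and $|\mathsf{m}|>sn_1+n/p>n_1$), contradicting the hypothesis.

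For the easy case $p=2$, Lemma~\ref{m*geqm}(ii) applied to the full monoid already yields $|\mathsf{m}|\le\tfrac12(2sn_1+n)=sn_1+n/p$, contradicting $|\mathsf{m}|>sn_1+n/p$. For the hard case $p>2$, Lemma~\ref{m*geqm}(ii) only delivers the weaker bound $|\mathsf{m}|\le sn_1+n/2$, and this is where Lemma~\ref{lexists} enters. The plan is to apply Lemma~\ref{lexists} with $\alpha=p$, $\beta$ equal to a coordinate of $\mathsf{m}$ coprime to $p$, and $\gamma$ chosen so that $\alpha\gamma\mid n_1$, producing an integer $\ell$ coprime to $n_1$ (hence to every $\ord(g_i)$) with $\ell\beta\equiv 1\pmod p$. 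Then I will form the ``twist''
\[
\mathsf{m}_\ell:=\bigl(\ell m_1\bmod n_1,\dots,\ell m_{2s}\bmod n_1,\ \ell m_{2s+1}\bmod \ord(g_{2s+1})\bigr)\in\mathcal{B}(g_1,\dots,g_{2s+1}),
\]
and record the identity
\[
\ell\mathsf{m}=\mathsf{m}_\ell+\sum_{i=1}^{2s}\Big\lfloor\tfrac{\ell m_i}{n_1}\Big\rfloor n_1\mathsf{e}_i+\Big\lfloor\tfrac{\ell m_{2s+1}}{\ord(g_{2s+1})}\Big\rfloor \ord(g_{2s+1})\mathsf{e}_{2s+1},
\]
expressing $\ell\mathsf{m}$ as a non-negative integer combination of $\mathsf{m}_\ell$ and the short ``order-atoms'' $n_1\mathsf{e}_i$ and $\ord(g_{2s+1})\mathsf{e}_{2s+1}$ (each of length $\le n_1<|\mathsf{m}|$).

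The crucial step will be the length estimate $|\mathsf{m}_\ell|<|\mathsf{m}|$, which I plan to extract from the congruence $\ell\beta\equiv 1\pmod p$ together with $|\mathsf{m}|>sn_1+\ord(g_{2s+1})/p$; the surplus of $s+1$ coordinates of order $n_1$ is what should make the shrinkage possible. A second application of Lemma~\ref{lexists} with different auxiliary parameters will yield a coprime partner $\ell'$ with $|\mathsf{m}_{\ell'}|<|\mathsf{m}|$, and a B\'ezout relation $a\ell+b\ell'=1$ will then combine the two identities into $\mathsf{m}=a(\ell\mathsf{m})+b(\ell'\mathsf{m})$, a $\mathbb{Z}$-combination of elements of $\mathcal{B}$ of length strictly less than $|\mathsf{m}|$. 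The chief obstacle I anticipate is establishing $|\mathsf{m}_\ell|<|\mathsf{m}|$ and simultaneously securing the coprime pair $(\ell,\ell')$; both hinge on a careful interplay between Lemma~\ref{lexists}'s $\gcd(\ell,\alpha\gamma)=1$ condition and the prescribed residue $\ell\beta\equiv 1\pmod p$.
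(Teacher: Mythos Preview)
Your proposal correctly identifies the twist $\mathsf{m}_\ell$ as the central object and correctly notes that once $|\mathsf{m}_\ell|<|\mathsf{m}|$ one is done (in fact a single $\ell$ suffices: since $\gcd(\ell,n_1)=1$ one can invert $\ell$ modulo every $\ord(g_i)$ and write $\mathsf{m}=\hat\ell\,\mathsf{m}_\ell-\sum t_i\ord(g_i)\mathsf{e}_i$, so the B\'ezout pair $(\ell,\ell')$ is unnecessary). However, the core of the argument---the length estimate $|\mathsf{m}_\ell|<|\mathsf{m}|$---is left as a ``chief obstacle'', and the parameters you feed into Lemma~\ref{lexists} do not give a workable route to it. Choosing $\alpha=p$ and $\beta$ a coordinate $m_j$ coprime to $p$ (such a coordinate need not even exist) yields only $\ell m_j\equiv 1\pmod p$, which says nothing about $|\mathsf{m}_\ell|=\sum_i(\ell m_i\bmod \ord(g_i))$.

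The paper's proof uses a different and sharper choice: it sets $d=\gcd(|\mathsf{m}|,\ord(g_{2s+1}))$, writes $|\mathsf{m}|=sn_1+bd$, and applies Lemma~\ref{lexists} with $\alpha=\ord(g_{2s+1})/d$, $\beta=b$, $\gamma=dn_1/\ord(g_{2s+1})$. The point is that $\ord(g_{2s+1})\mid\ord(g_i)$ for every $i$, so $|\mathsf{m}_\ell|\equiv\ell|\mathsf{m}|\pmod{\ord(g_{2s+1})}$, and this choice of $\ell$ forces $|\mathsf{m}_\ell|\equiv d\pmod{\ord(g_{2s+1})}$. One then argues by contradiction: if $\mathsf{m}$ were a group atom, both $|\mathsf{m}_\ell|\ge|\mathsf{m}|$ \emph{and} $|\mathsf{m}_\ell^*|\ge|\mathsf{m}|$ would hold, sandwiching $|\mathsf{m}_\ell|$ into the window $(sn_1+\tfrac{\ord(g_{2s+1})}{p},\,sn_1+\ord(g_{2s+1})-\tfrac{\ord(g_{2s+1})}{p})$; the only residue-class representative in that window is $sn_1+d$, whence $\tfrac{\ord(g_{2s+1})}{p}<d<\ord(g_{2s+1})(1-\tfrac1p)$, impossible for a divisor $d$ of $\ord(g_{2s+1})$. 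The complementer $\mathsf{m}_\ell^*$---absent from your plan---is exactly what closes the gap you anticipated.
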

\begin{proof}
Suppose for contradiction that $\mathsf{m}$ is a group atom in $\mathcal{B}(g_1,\dots,g_{2s+1})$. 
Set \[d:=\gcd(|\mathsf{m}|,\ord(g_{2s+1}))=\gcd(|\mathsf{m}|-sn_1,\ord(g_{2s+1})).\]  
So $|\mathsf{m}|=sn_1+bd$ for some positive integer $b$ with  $\gcd(b,\frac{\ord(g_{2s+1})}{d})=1$. We use Lemma \ref{lexists} with \[\alpha=\frac{\ord(g_{2s+1})}{d},\quad  \beta=b,\quad  
\gamma= \frac{dn_1}{\ord(g_{2s+1})}.\]  This way we get an $\ell\in\{1,2,\dots,n_1-1\}$, for which $\gcd(\ell,n_1)=1$ and $\ell b\equiv 1$ $\md$ $\frac{\ord(g_{2s+1})}{d}$ hold, implying in particular that \begin{equation}\label{eq:a}
    \ell bd\equiv d \ \md \ord(g_{2s+1}).
\end{equation}
Denote by $m_{\ell,i}$ the unique element in 
$\{0,1,\dots,\ord(g_i)-1\}$ such that $m_{\ell,i}\equiv \ell m_i$ $\md$ $\ord(g_i)$. Observe that $\mathsf{m}_{\ell}:=[m_{\ell,1},\dots m_{\ell,2s+1}]\in\mathcal{B}(g_1,\dots,g_{2s+1})$. Then
\[|\mathsf{m}_{\ell}|=\sum_{i=1}^{2s+1} m_{\ell,i}\equiv \sum_{i=1}^{2s+1} \ell m_i=\ell|\mathsf{m}|=\ell(sn_1+bd)\equiv \ell bd \md\ord(g_{2s+1})\]
implying by \eqref{eq:a} that 
\[|\mathsf{m}_{\ell}|\equiv d\md \ord(g_{2s+1}).\] 
Taking into account 
\[|\mathsf{m}_{\ell}|<\sum_{i=1}^{2s+1}\ord(g_i)=2sn_1+\ord(g_{2s+1})\] 
we get that 
\begin{equation}\label{eq:d}
|\mathsf{m}_{\ell}|\in\left\{d+j\ord(g_{2s+1})\mid j=0,1,\dots,\frac{2sn_1}{\ord(g_{2s+1})}\right\}.\end{equation}
Since $\gcd(\ell,n_1)=1$ we have that there exists an $\hat{\ell}\in\{1,2,\dots,n_1-1\}$, such that $\ell\hat{\ell}\equiv 1$ $\md$ $n_1$ (hence $\ell\hat{\ell}\equiv 1$ $\md$ $\ord(g_{2s+1})$ also holds). 
Moreover, 
\[\hat{\ell}m_{\ell,i}\equiv \hat{\ell}\ell m_i\equiv m_i\md\ord(g_i),\] 
so for each $i\in\{1,\dots,2s+1\}$ there exists an integer $t_i$ such that \[\hat{\ell}m_{\ell,i}-t_i\ord(g_i)=m_i.\] 
This shows that $\mathsf{m}$ can be expressed by using $\mathsf{m}_{\ell}$ and $\mathsf{e_i}$ as follows:
\begin{center}
    $\mathsf{m}=\hat{\ell}\mathsf{m}_{\ell}-\sum_{i=1}^{2s+1}t_i(\ord(g_i) \mathsf{e_i})$.
\end{center}
Since $|\ord(g_i)\mathsf{e_i}|=\ord(g_i)<sn_1+\frac{\ord(g_{2s+1})}{p}<|\mathsf{m}|$, the assumption that $\mathsf{m}$ is a group atom in $\mathcal{B}(g_1,\dots,g_{2s+1})$ implies that 
\begin{equation}\label{eq:e}|\mathsf{m}_{\ell}|\ge |\mathsf{m}|>sn_1+\frac{\ord(g_{2s+1})}{p}.\end{equation} 
Of course, $\mathsf{m}$ can be expressed by using $\mathsf{m}_{\ell}^*$ and $\mathsf{e_i}$ too:
\begin{center}
    $\mathsf{m}=\hat{\ell}(\sum_{i=1}^{2s+1}\ord(g_i) \mathsf{e_i}-\mathsf{m}_{\ell}^*)-\sum_{i=1}^{2s+1}t_i(\ord(g_i) \mathsf{e_i})$.
\end{center}
Again, by the fact that $\mathsf{m}$ is a group atom in $\mathcal{B}(g_1,\dots,g_{2s+1})$ with $|\ord(g_i)\mathsf{e_i}|<|\mathsf{m}|$, we get that $sn_1+\frac{\ord(g_{2s+1})}{p}<|\mathsf{m}|\leq|\mathsf{m}_{\ell}^*|$, hence
\begin{equation}\label{eq:f}
|\mathsf{m}_{\ell}|=2sn_1+\ord(g_{2s+1})-|\mathsf{m}_{\ell}^*|<sn_1+\ord(g_{2s+1})-\frac{\ord(g_{2s+1})}{p}.
\end{equation}
Combining  \eqref{eq:e} and \eqref{eq:f} we get 
\begin{equation}\label{eq:g} 
sn_1+\frac{\ord(g_{2s+1})}{p}<
|\mathsf{m}_{\ell}|
<sn_1+\ord(g_{2s+1})-\frac{\ord(g_{2s+1})}{p}.
\end{equation}
Now \eqref{eq:d} and \eqref{eq:g} imply that 
$|\mathsf{m}_{\ell}|=d+sn_1$, 
showing in turn by \eqref{eq:g} that 
\[\frac{\ord(g_{2s+1})}{p}<d<\ord(g_{2s+1})(1-\frac{1}{p}).\]
However, this is a contradiction, since $d<\ord(g_{2s+1})(1-\frac{1}{p})$ implies that $d$ is a proper divisor of $\ord(g_{2s+1})$, so $\frac{\ord(g_{2s+1})}{p}<d$ can not hold (recall that $p$ is the minimal prime divisor of $\ord(g_{2s+1})$).
This shows that $|\mathsf{m}|$ is not a group atom.
\end{proof}

\begin{ex}
We mention that an element $\mathsf{m}$ satisfying the conditions of Lemma \ref{kn+gcd} may well be an atom in $\mathcal{B}(g_1,\dots,g_{2s+1})$. Indeed, consider for example the group $C_{12}\oplus C_{4}$, and the element  $[11,1,3]\in\mathcal{B}((1,0),(1,1),(0,1))$. In this case $\ord(g_1)=\ord(g_2)=12$, while $\ord(g_3)=4$, and so $\frac{\ord(g_3)}{p}=2$. Here $15=11+3+1>1\cdot12+2=14$, so all the conditions of Lemma \ref{kn+gcd} are satisfied, hence it is not a group atom. Indeed, we can express $[11,1,3]$ as an integral linear combination of such elements of $\mathcal{B}((1,0),(1,1),(0,1))$ that have length strictly smaller than $15$: $[11,1,3]=7[5,7,1]-2[12,0,0]-4[0,12,0]-[0,0,4]$. On the other hand, $[11,1,3]$ is easily seen to be an atom in the monoid $\mathcal{B}((1,0),(1,1),(0,1))$. 
\end{ex}

\section{A general lower bound}\label{sec:lowerbound}
Developing further an idea from the proof of 
Proposition 3.8 of \cite{domokos_abelian}, we shall give a lower bound for the separating Noether number of an abelian group of odd rank $r=2s-1$.  
The following notation is used in Proposition~\ref{divbynr1} and Lemma~\ref{bsgeqsumni}. 
 Denote by $e_1,\dots,e_{s},f_1,\dots,f_{s-1}$ the generators of the direct factors of $G$, (so the order of $e_i$ is $n_i$ for $i=1,\dots,s$ and the order of $f_j$ is $n_{j+s}$ for $j=1,\dots,s-1$). Set $g_1=e_1$, $g_{2i}=e_i+f_i$ and $g_{2i+1}=f_i+e_{i+1}$ for $i=1,\dots,s-1$, and $g_{r+1}=e_s$. The elements $g_1,\dots,g_{r+1}$ 
 are listed in the following table:
\begin{equation}\label{tabl1}
\setcounter{MaxMatrixCols}{15}
\begin{matrix}
    & e_1 & e_2 &  \dots & e_{s-1} & e_s & f_1 & f_2  & \dots & f_{s-2} & f_{s-1} &  \\
    g_1= & (1 & 0 &  \dots & 0 & 0 & 0 & 0 & \dots & 0 & 0) \\
    g_2= & (1 & 0 &  \dots & 0 & 0 & 1 & 0 & \dots & 0 & 0) \\
    g_3= & (0 & 1 &  \dots & 0 & 0 & 1 & 0 & \dots & 0 & 0) \\
    g_4= & (0 & 1 &  \dots & 0 & 0 & 0 & 1 & \dots & 0 & 0) \\
    & & & & & & \dots\\
    g_{r-2}= & (0 & 0 &  \dots & 1 & 0 & 0 & 0 &  \dots & 1 & 0) \\
    g_{r-1}= & (0 & 0 &  \dots & 1 & 0 & 0 & 0 &  \dots & 0 & 1) \\
    g_{r}= & (0 & 0 &  \dots & 0 & 1 & 0 & 0 &  \dots & 0 & 1) \\
    g_{r+1}= & (0 & 0 &  \dots & 0 & 1 & 0 & 0 &  \dots & 0 & 0) \\
\end{matrix}
\end{equation}
\begin{proposition}\label{divbynr1}
Suppose that $r=2s-1$ is odd. Then each coordinate of an atom $\mathsf{m}\in\mathcal{B}(g_1,\dots,g_{r+1})$ (where $g_1,\dots,g_{r+1}$ are listed in \eqref{tabl1}) with $|\supp(\mathsf{m})|<r+1$ is divisible by $n_r$.
\end{proposition}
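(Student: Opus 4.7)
The plan is to translate the zero-sum condition $\sum_{i=1}^{r+1} m_i g_i = 0$ in $G$ into a system of congruences indexed by the generators $e_1, \dots, e_s, f_1, \dots, f_{s-1}$, and then exploit the path-like structure of the sequence $g_1, \dots, g_{r+1}$ to propagate divisibility by $n_r$ along it.

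Reading off \eqref{tabl1}, each $e_i$ and each $f_j$ appears in exactly two \emph{consecutive} $g_l$'s, so $\sum_l m_l g_l = 0$ is equivalent to the system
\begin{align*}
    m_{2i-1} + m_{2i} &\equiv 0 \pmod{n_i}, && i = 1, \dots, s, \\
    m_{2j} + m_{2j+1} &\equiv 0 \pmod{n_{s+j}}, && j = 1, \dots, s-1.
\end{align*}
In particular each consecutive pair $(m_i, m_{i+1})$ for $i = 1, \dots, r$ is linked by exactly one congruence of the above type, whose modulus belongs to $\{n_1, \dots, n_r\}$ --- a collection of numbers all divisible by $n_r = n_{2s-1}$.

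Now assume $|\supp(\mathsf{m})| < r+1$, so $m_{j_0} = 0$ for some index $j_0$. I will show by induction on $|i - j_0|$ that $n_r \mid m_i$ for every $i \in \{1, \dots, r+1\}$. The base case $i = j_0$ is immediate. For the inductive step, assume $n_r \mid m_i$ and consider a neighbor $m_{i'}$ (with $i' = i \pm 1$) linked to $m_i$ by $m_i + m_{i'} \equiv 0 \pmod{n_l}$, where $n_r \mid n_l$. Writing $m_{i'} = n_l t - m_i$ for some $t \in \mathbb{Z}$, both summands on the right are divisible by $n_r$, hence so is $m_{i'}$. Since the chain $m_1 \leftrightarrow m_2 \leftrightarrow \dots \leftrightarrow m_{r+1}$ is connected, divisibility by $n_r$ spreads from $m_{j_0}$ to every coordinate, giving the claim.

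No step poses a real obstacle: the argument is essentially bookkeeping on the chain structure visible in \eqref{tabl1}. Incidentally, atomhood of $\mathsf{m}$ is not actually used --- the same argument shows that \emph{every} zero-sum $\mathsf{m} \in \mathcal{B}(g_1, \dots, g_{r+1})$ with $|\supp(\mathsf{m})| < r+1$ has all coordinates divisible by $n_r$.
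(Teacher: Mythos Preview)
Your proof is correct and essentially the same as the paper's. The paper writes down the identical system of congruences, reduces them all modulo $n_r$ to obtain the single chain $m_1\equiv -m_2\equiv\cdots\equiv -m_{r+1}\pmod{n_r}$, and then observes that one vanishing coordinate forces them all to be divisible by $n_r$; your inductive propagation along the chain is just a slightly more explicit phrasing of the same step, and your remark that atomhood is never used is accurate (the paper's proof does not use it either).
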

\begin{proof}
    Let $\mathsf{m}=[m_1,m_2,\dots,m_{r+1}]\in\mathcal{B}(g_1,\dots,g_{r+1})$ be an atom. We have the congruences
\begin{align} \label{zscongr} 
   m_{2i-1}+m_{2i}&\equiv 0 \ \md \ n_i \quad (i=1,\dots,s), 
    \\  \notag
     m_{2i}+m_{2i+1}&\equiv 0 \  \md \ n_{s+i}, \quad (i=1,\dots,s-1)
\end{align}
Since $n_{r}$ divides $n_{i}$ for each $i\in\{1,\dots,r\}$, all the congruences can be seen modulo $n_{r}$, which gives the following chain of congruences:
\begin{equation}\label{chaineq}
    m_1\equiv -m_2\equiv \dots\equiv m_r\equiv-m_{r+1}\hbox{ }\md\hbox{ }n_r
\end{equation}
If $|\supp(\mathsf{m})|<r+1$, then we have an $i_0\in\{1,\dots,r+1\}$, for which $m_{i_0}=0$. Putting this in equation \eqref{chaineq} gives that  $m_i\equiv 0$ $\md$ $n_r$ for $i\in\{1,\dots,r+1\}$.
\end{proof}

\begin{lemma}\label{bsgeqsumni}
Let $G=C_{n_1}\oplus C_{n_2}\oplus \dots\oplus C_{n_r}$, where $2\leq n_r\mid n_{r-1}\mid\dots\mid n_1$. Then for $r=2s-1$, $\beta_{sep}(G)\geq n_1+\dots+n_s$.
\end{lemma}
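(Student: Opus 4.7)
By Lemma \ref{md2} it suffices to exhibit a group atom of length $n_1 + \cdots + n_s$ in some block monoid $\mathcal{B}(g_1, \dots, g_k)$ with $k \leq 2s$. My plan is to take $k = 2s$ with the elements $g_1, \dots, g_{2s}$ listed in \eqref{tabl1}, and the candidate $\mathsf{m} = [m_1, \dots, m_{2s}]$ defined by
\[m_{2i-1} = n_i - 1, \qquad m_{2i} = 1 \qquad (i = 1, \dots, s).\]
The degenerate case $s=1$ is handled separately by the trivial group atom $[n_1] \in \mathcal{B}(e_1)$, so I may assume $s \geq 2$ (which also guarantees the distinctness of the $g_j$'s).

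First I would verify that $\mathsf{m} \in \mathcal{B}(g_1, \dots, g_{2s})$ via the congruences \eqref{zscongr}: on each $e_i$-axis, $m_{2i-1} + m_{2i} = n_i$, and on each $f_j$-axis ($j = 1, \dots, s-1$), $m_{2j} + m_{2j+1} = n_{j+1}$ is divisible by $n_{s+j}$ because $s + j \geq j + 2$ (using $s \geq 2$) combined with the divisibility chain $n_{2s-1}\mid\dots\mid n_1$ gives $n_{s+j} \mid n_{j+1}$. Moreover $|\mathsf{m}| = \sum_{i=1}^s n_i$ and $\supp(\mathsf{m}) = \{1, \dots, 2s\}$ (using $n_i \geq 2$).

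The main step is to apply Lemma \ref{groupatomL} with $d = n_r$ and distinguished index $2$. Condition (i) is exactly Proposition \ref{divbynr1}, condition (ii) holds because $m_2 = 1$ is not divisible by $n_r \geq 2$, and for condition (iii) I would show that every $[a_1, \dots, a_{2s}] \in \mathcal{B}(g_1, \dots, g_{2s})$ with full support satisfies $\sum_j a_j \geq \sum_{i=1}^s n_i$: by \eqref{zscongr} the positive integers $a_{2i-1}, a_{2i}$ have sum a positive multiple of $n_i$, hence at least $n_i$, and summing yields the bound. That $\mathsf{m}$ itself is an atom follows from the same observation combined with Proposition \ref{divbynr1}: any nontrivial decomposition of $\mathsf{m}$ into elements of strictly smaller length would force both summands to have support size less than $2s$, hence (by writing each summand as a sum of atoms and invoking Proposition \ref{divbynr1}) all their coordinates divisible by $n_r$, contradicting $m_2 = 1$. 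Therefore $|\mathsf{m}|$ is minimal among lengths of atoms with full support, Lemma \ref{groupatomL} identifies $\mathsf{m}$ as a group atom, and Lemma \ref{md2} yields $\beta_{sep}(G) \geq |\mathsf{m}| = n_1 + \cdots + n_s$.

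The only substantive obstacle is the minimality estimate in condition (iii), which rests on the elementary fact that two positive integers whose sum is a multiple of $n_i$ must sum to at least $n_i$; everything else is routine verification using the structure of the table \eqref{tabl1} together with Proposition \ref{divbynr1}.
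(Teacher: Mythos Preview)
Your proof is correct and follows essentially the same route as the paper: the same elements $g_1,\dots,g_{2s}$ from table~\eqref{tabl1}, the same candidate $\mathsf{m}=[n_1-1,1,\dots,n_s-1,1]$, and the same appeal to Lemma~\ref{groupatomL} via Proposition~\ref{divbynr1} together with the pairing estimate $z_{2i-1}+z_{2i}\ge n_i$. The only differences are cosmetic: the paper takes the distinguished index $i=r+1$ rather than $i=2$ (both entries equal $1$, so either works), it does not single out the case $s=1$, and it does not separately check that $\mathsf{m}$ is an atom (condition~(iii) of Lemma~\ref{groupatomL} only needs $|\mathsf{m}|$ to be a lower bound for the lengths of full-support atoms, which your argument already establishes for all full-support elements).
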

\begin{proof}
We shall apply Lemma \ref{groupatomL} with 
$k=r+1$, $i=r+1$, $d=n_r$, and 
\begin{center}
    $\mathsf{m}=[n_1-1,1,n_2-1,1,\dots,n_s-1,1]\in\mathcal{B}(g_1,\dots,g_{r+1})$. 
\end{center}
Condition (ii) of Lemma \ref{groupatomL} obviously holds for $\mathsf{m}$, and 
by Proposition~\ref{divbynr1}, condition (i) of Lemma \ref{groupatomL} holds as well. 
Next let $\mathsf{z}=[z_1,\dots,z_{r+1}]\in\mathcal{B}(g_1,\dots,g_{r+1})$ be an element with $|\supp(\mathsf{z})|=r+1$. So $z_i>0$  for each $i\in\{1,\dots,r+1\}$. Moreover, $\mathsf{z}$ fulfills the conditions given in congruences \eqref{zscongr}. So $z_{2i-1}+z_{2i}\equiv 0$ $\md$ $n_i$ with $z_{2i-1}>0$ and $z_{2i}>0$, which yields that $z_{2i-1}+z_{2i}\geq n_i$. So for any element $\mathsf{z}\in\mathcal{B}(g_1,\dots,g_{r+1})$ with $|\supp(\mathsf{z})|=r+1$ we have:
\begin{center}
    $|\mathsf{z}|=(z_1+z_2)+\dots+(z_r+z_{r+1})\geq n_1+\dots+n_s$.
\end{center}
This shows that assumption (iii) of Lemma \ref{groupatomL} is also satisfied by $\mathsf{m}$.  Consequently, $\mathsf{m}$ is a group atom in $\mathcal{B}(g_1,\dots,g_{r+1})$ of length $n_1+\dots+n_s$ by Lemma \ref{groupatomL}. 
\end{proof}

\begin{remark} In the special case, when the size of more
than half of the cyclic direct summands of $G$ equals $\exp(G)$, the inequality in Lemma~\ref{bsgeqsumni} turns out to be sharp, see Theorem \ref{ptlci}. 
\end{remark}

By slight modification of the construction of Lemma \ref{bsgeqsumni}, we are able to give similar results in the case when $r=2s$. In this case let 
$p$ be a prime divisor of $n_r$, and let $g_1=e_1-\frac{n_{s+1}}{p}f_1$, $g_{2i}=e_{i}+f_{i+1}$ and $g_{2i+1}=f_{i+1}+e_{i+1}$ for $i=1,\dots,s-1$, and $g_r=e_s$, $g_{r+1}=f_1$. The elements $g_1,\dots,g_{r+1}$ are listed in the table:
\begin{equation}\label{tabl2}
\setcounter{MaxMatrixCols}{15}
\begin{matrix}
    & e_1 & e_2 &  \dots & e_{s-1} & e_s & f_1 & f_2  & \dots & f_{s-1} & f_{s} &  \\
    g_1= & (1 & 0 &  \dots & 0 & 0 & -\frac{n_{s+1}}{p} & 0 & \dots & 0 & 0) \\
    g_2= & (1 & 0 &  \dots & 0 & 0 & 0 & 1 & \dots & 0 & 0) \\
    g_3= & (0 & 1 &  \dots & 0 & 0 & 0 & 1 &  \dots & 0 & 0) \\
    g_4= & (0 & 1 &  \dots & 0 & 0 & 0 & 0 &  \dots & 0 & 0) \\
    & & & & & & \dots\\
    g_{r-2}= & (0 & 0 &  \dots & 1 & 0 & 0 & 0 &  \dots & 0 & 1) \\
    g_{r-1}= & (0 & 0 &  \dots & 0 & 1 & 0 & 0 &  \dots & 0 & 1) \\
    g_{r}= & (0 & 0 &  \dots & 0 & 1 & 0 & 0 &  \dots & 0 & 0) \\
    g_{r+1}= & (0 & 0 &  \dots & 0 & 0 & 1 & 0 & \dots & 0 & 0) \\
\end{matrix}
\end{equation}
\begin{proposition}\label{divbynr}
Suppose that $r=2s$ is even. Then each coordinate of an atom $\mathsf{m}\in\mathcal{B}(g_1,\dots,g_{r+1})$ (where $g_1,\dots,g_{r+1}$ are listed in \eqref{tabl2}) with $|\supp(\mathsf{m})|<r+1$ is divisible by each prime divisor $p$ of $n_r$.
\end{proposition}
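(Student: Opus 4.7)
My plan is to adapt the argument of Proposition~\ref{divbynr1} to the even-rank setting. The key new feature is that the coefficient $-\tfrac{n_{s+1}}{p}$ appearing in $g_1$ ties the ``last'' coordinate $m_{2s+1}$ to $m_1$ only modulo $p$ (rather than modulo $n_r$), which is exactly why the divisibility conclusion weakens from ``by $n_r$'' in Proposition~\ref{divbynr1} to ``by $p$'' here.

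Expanding $\sum_{i=1}^{r+1} m_i g_i = 0$ in the basis $e_1,\dots,e_s,f_1,\dots,f_s$, I would read off three families of congruences: from the $e_i$-coefficient, $m_{2i-1} + m_{2i} \equiv 0 \pmod{n_i}$ for $i = 1,\dots,s$; from the $f_j$-coefficient for $j = 2,\dots,s$, $m_{2j-2} + m_{2j-1} \equiv 0 \pmod{n_{s+j}}$; and from the $f_1$-coefficient, the distinguished congruence $-\tfrac{n_{s+1}}{p}\, m_1 + m_{2s+1} \equiv 0 \pmod{n_{s+1}}$. Since $p \mid n_r$ and $n_r$ divides each $n_i$, the first two families reduced modulo $p$ combine into the chain $m_1 \equiv -m_2 \equiv m_3 \equiv -m_4 \equiv \cdots \equiv \pm m_{2s} \pmod p$, exactly as in Proposition~\ref{divbynr1}, but now confined to the indices $\{1,\dots,2s\}$.

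Given $|\supp(\mathsf{m})| < r+1$, I would case-split on the vanishing coordinate $i_0$. If $i_0 \le 2s$, the chain immediately yields $p \mid m_i$ for every $i \le 2s$; substituting $p \mid m_1$ into the $f_1$-congruence turns $\tfrac{n_{s+1}}{p}\, m_1$ into a multiple of $n_{s+1}$, so $m_{2s+1} \equiv 0 \pmod{n_{s+1}}$, and in particular $p \mid m_{2s+1}$. If instead $i_0 = 2s+1$, the $f_1$-congruence collapses to $\tfrac{n_{s+1}}{p}\, m_1 \equiv 0 \pmod{n_{s+1}}$, which is equivalent to $p \mid m_1$; the chain then propagates $p$-divisibility to every $m_i$ with $i \le 2s$, while $p \mid 0 = m_{2s+1}$ is automatic.

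The main (mild) obstacle compared with the odd-rank case is precisely this second subcase: the congruence chain by itself links only indices $1,\dots,2s$ and cannot ``reach'' index $2s+1$, so one must first exploit the twisted $f_1$-congruence to seed divisibility at $m_1$ before the chain argument can be run. Apart from this twist, the proof is a direct adaptation of Proposition~\ref{divbynr1}.
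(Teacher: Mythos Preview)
Your proposal is correct and follows essentially the same route as the paper: the paper derives the identical three families of congruences, reduces the first two modulo $p$ to obtain the chain $m_1\equiv -m_2\equiv\cdots\equiv -m_{2s}\pmod p$, and then performs exactly your case split on whether the vanishing index $i_0$ equals $r+1$ or not, using the $f_1$-congruence in each subcase just as you describe.
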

\begin{proof}
Let $\mathsf{m}=[m_1,m_2,\dots,m_{r+1}]\in\mathcal{B}(g_1,\dots,g_{r+1})$ be an atom, and let $p$ be a prime divisor of $n_r$. Then we have the congruences
\begin{align} \notag
    m_{2i-1}+m_{2i}&\equiv 0 \ \md \ n_i \quad (i=1,\dots, s), 
    \\  \notag  
    m_{2i-2}+m_{2i-1}&\equiv 0 \  \md \ n_{s+i}
    \quad (i=2,\dots, s), 
    \\ \label{ns+1} 
    m_{r+1}-m_1\frac{n_{s+1}}{p}&\equiv 0\hbox{ }\md\hbox{ }n_{s+1}.
\end{align}
Since $p$ divides $n_i$ for each $i\in\{1,\dots,r\}$, the congruences can be seen modulo $p$:
\begin{equation}\label{chain}
    m_1\equiv -m_2\equiv\dots \equiv m_{r-1}\equiv -m_r\hbox{ }\md\hbox{ }p
\end{equation}
If $|\supp(\mathsf{m})|<r+1$, then there exists an $i_0\in\{1,\dots,r\}$ for which $m_{i_0}=0$. If $i_0\neq r+1$, then by \eqref{chain} we get that $m_i\equiv 0\md$ $p$ for each $i\in\{1,\dots,r\}$. Moreover, writing $m_1\equiv 0$ $\md$ $p$ in congruence \eqref{ns+1} implies that $m_{r+1}\equiv 0$ $\md$ $n_{s+1}$, so $m_{r+1}\equiv 0$ $\md$ $p$ is also true.\par
If $i_0=r+1$, then $m_{r+1}=0$, so by congruence \eqref{ns+1} we get that $m_1\frac{n_{s+1}}{p}\equiv 0$ $\md$ $n_{s+1}$. From here, $m_1\equiv 0$ $\md$ $p$, which by chain of congruences in \eqref{chain} implies that $m_1\equiv m_2\equiv\dots\equiv m_r\equiv 0$ $\md$ $p$. Since $m_{r+1}=0$, we are done.
\end{proof}

\begin{lemma}\label{bsgeqsumni1}
     Let $G=C_{n_1}\oplus C_{n_2}\oplus \dots\oplus C_{n_r}$, where $2\leq n_r\mid n_{r-1}\mid\dots\mid n_1$ and $r=2s$. Suppose that $p$ is a prime divisor of $n_r$. Then $\beta_{sep}(G)\geq n_1+\dots+n_s+\frac{n_{s+1}}{p}$.
\end{lemma}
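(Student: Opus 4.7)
The plan is to mirror the strategy of Lemma~\ref{bsgeqsumni}: construct an element $\mathsf{m}\in\mathcal{B}(g_1,\dots,g_{r+1})$ (with the generators in \eqref{tabl2}) having full support and length exactly $n_1+\dots+n_s+\frac{n_{s+1}}{p}$, then apply Lemma~\ref{groupatomL} to conclude that $\mathsf{m}$ is a group atom. Since $r+1=\rank(G)+1$, Lemma~\ref{md2} will immediately yield $\beta_{sep}(G)\ge |\mathsf{m}|=n_1+\dots+n_s+\frac{n_{s+1}}{p}$.

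The candidate is
\[\mathsf{m}:=\left[1,\,n_1-1,\,1,\,n_2-1,\,\dots,\,1,\,n_s-1,\,\frac{n_{s+1}}{p}\right]\in\mathbb{Z}^{r+1}.\]
First I would verify membership in $\mathcal{B}(g_1,\dots,g_{r+1})$ by checking each coordinate relation dictated by \eqref{tabl2}: the $e_i$-relation gives $m_{2i-1}+m_{2i}=n_i$; the $f_j$-relation for $j\ge 2$ gives $m_{2j-2}+m_{2j-1}=n_{j-1}$, which is a multiple of $n_{s+j}$ because $n_{s+j}\mid n_{j-1}$; and the $f_1$-relation $-m_1\frac{n_{s+1}}{p}+m_{r+1}\equiv 0\pmod{n_{s+1}}$ reduces to $-\frac{n_{s+1}}{p}+\frac{n_{s+1}}{p}=0$.

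Next, I would invoke Lemma~\ref{groupatomL} with $k=r+1$, $i=1$, and $d=p$. Hypothesis (i) is delivered immediately by Proposition~\ref{divbynr}, and (ii) is trivial since $m_1=1$ and $p\ge 2$. The substantive work is (iii), the minimality of $|\mathsf{m}|$ among full-support elements: given $\mathsf{z}=[z_1,\dots,z_{r+1}]\in\mathcal{B}(g_1,\dots,g_{r+1})$ with $z_i>0$ for every $i$, the $e_i$-relations force $z_{2i-1}+z_{2i}\ge n_i$ for $i=1,\dots,s$. For the last coordinate, the $f_1$-relation gives $z_{r+1}\equiv z_1\frac{n_{s+1}}{p}\pmod{n_{s+1}}$; the right-hand side is always a multiple of $\frac{n_{s+1}}{p}$, so combined with $z_{r+1}>0$ this produces $z_{r+1}\ge\frac{n_{s+1}}{p}$ (in the sub-case $p\mid z_1$ the residue is $0$, so $z_{r+1}$ is a positive multiple of $n_{s+1}\ge\frac{n_{s+1}}{p}$). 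Summing gives $|\mathsf{z}|\ge n_1+\dots+n_s+\frac{n_{s+1}}{p}=|\mathsf{m}|$.

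The principal obstacle compared to the odd-rank case is precisely this analysis of the last coordinate in (iii). The $-\frac{n_{s+1}}{p}$ coefficient in $g_1$ from \eqref{tabl2} is exactly what creates the additional $\frac{n_{s+1}}{p}$ term in the bound, and one must carefully treat both cases $p\mid z_1$ and $p\nmid z_1$ to obtain a uniform lower bound on $z_{r+1}$; everything else is direct bookkeeping parallel to Lemma~\ref{bsgeqsumni}.
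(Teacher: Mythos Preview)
Your proof is correct and follows exactly the same strategy as the paper: apply Lemma~\ref{groupatomL} with $k=r+1$ and $d=p$, use Proposition~\ref{divbynr} for condition~(i), and bound the length of any full-support element from below via the $e_i$-congruences together with the divisibility $\frac{n_{s+1}}{p}\mid z_{r+1}$ for condition~(iii). The only difference is in the specific witness: the paper takes $\mathsf{m}=[n_1-1,1,n_2-1,1,\dots,n_s-1,1,\frac{n_{s+1}}{p}]$ with index $i=2$, whereas you reverse the alternating pattern to $[1,n_1-1,\dots,1,n_s-1,\frac{n_{s+1}}{p}]$ with $i=1$; your choice makes the $f_1$-congruence check collapse to $-\frac{n_{s+1}}{p}+\frac{n_{s+1}}{p}=0$, which is the cleaner verification.
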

\begin{proof}
We shall apply Lemma \ref{groupatomL} again, now with 
$k=r+1$, $i=2$, $d=p$, and 
\begin{center}
    $\mathsf{m}=[n_1-1,1,n_2-1,1,\dots,n_s-1,1,\frac{n_{s+1}}{p}]\in\mathcal{B}(g_1,\dots,g_{r+1})$. 
\end{center}
Condition (ii) of Lemma \ref{groupatomL} obviously holds for $\mathsf{m}$, and by Proposition~\ref{divbynr}, condition (i) of Lemma \ref{groupatomL} holds as well. Next let $\mathsf{z}=[z_1,\dots,z_{r+1}]\in\mathcal{B}(g_1,\dots,g_{r+1})$ be an element with $|\supp(\mathsf{z})|=r+1$. Then $z_{2i-1}+z_{2i}\geq n_i$ for each $i\in\{1,\dots,s\}$, moreover, $z_{r+1}-z_1\frac{n_{s+1}}{p}\equiv 0$ $\md$ $n_{s+1}$, which gives that $z_{r+1}\equiv 0$ $\md$ $\frac{n_{s+1}}{p}$, so $z_{r+1}\geq\frac{n_{s+1}}{p}$. So for any element $\mathsf{z}\in\mathcal{B}(g_1,\dots,g_{r+1})$ with $|\supp(\mathsf{z})|=r+1$ we have:
\begin{center}
$|\mathsf{z}|=(z_1+z_2)+\dots+(z_r+z_{r+1})\geq n_1+\dots+n_s+\frac{n_{s+1}}{p}$  \end{center} 
Hence the assumption (iii) of Lemma \ref{groupatomL} is also satisfied by $\mathsf{m}$.  Consequently, $\mathsf{m}$ is a group atom in $\mathcal{B}(g_1,\dots,g_{r+1})$ of length $n_1+\dots+n_s+\frac{n_{s+1}}{p}$ by Lemma \ref{groupatomL}. 
\end{proof}

\section{Main results and their proofs}\label{sec:results}

\begin{theorem}\label{ptlci}
Assume that $2\le n_{2s-1}\mid n_{2s-2}\mid\dots\mid n_{s+1}\mid n_s=n_{s-1}=\dots=n_1$. 
Then we have 
    $\beta_{sep}(C_{n_1}\oplus\dots\oplus C_{n_{2s-1}})=sn_1$.
\end{theorem}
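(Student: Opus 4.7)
The plan is to sandwich $\beta_{sep}(G)$ between two bounds that have already been established in the excerpt, and observe that under the stated divisibility hypothesis they coincide.

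For the upper bound, I would apply Corollary~\ref{nkappa} directly. Since $G = C_{n_1}\oplus\cdots\oplus C_{n_{2s-1}}$ has exponent $\exp(G)=n_1$ and rank $\rank(G)=2s-1$, the corollary gives
\[
\beta_{sep}(G)\ \le\ \left\lfloor\frac{n_1\cdot (2s-1+1)}{2}\right\rfloor\ =\ \left\lfloor\frac{n_1\cdot 2s}{2}\right\rfloor\ =\ sn_1.
\]
This step uses none of the special structure of the hypothesis $n_s=\dots=n_1$; it is a clean consequence of the general complementer inequality from Lemma~\ref{m*geqm}.

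For the matching lower bound, I would invoke Lemma~\ref{bsgeqsumni}, which asserts
\[
\beta_{sep}(G)\ \ge\ n_1+n_2+\cdots+n_s
\]
whenever $r=2s-1$. This is exactly where the divisibility assumption of Theorem~\ref{ptlci} is used: by hypothesis $n_s=n_{s-1}=\cdots=n_1$, so the right hand side equals $sn_1$. Combining the two bounds yields $\beta_{sep}(G)=sn_1$.

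I do not expect any real obstacle: the hard work of constructing a group atom of length $n_1+\cdots+n_s$ (via the configuration in table \eqref{tabl1} together with Proposition~\ref{divbynr1} and the atom-detection Lemma~\ref{groupatomL}) has been done inside Lemma~\ref{bsgeqsumni}, and the upper bound is a one-line specialization of Corollary~\ref{nkappa}. The only thing to verify explicitly is that, under the equal-exponent hypothesis on the first $s$ summands, the telescoping sum $n_1+\cdots+n_s$ collapses to $sn_1$, matching the ceiling-type upper bound $\lfloor n_1\cdot 2s/2\rfloor$. Thus the two independently established inequalities meet exactly at $sn_1$, proving the theorem.
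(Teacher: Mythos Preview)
Your proposal is correct and follows essentially the same approach as the paper: the paper's proof is precisely the two-line sandwich you describe, using Lemma~\ref{bsgeqsumni} for the lower bound $n_1+\dots+n_s=sn_1$ and Corollary~\ref{nkappa} for the upper bound $\lfloor 2sn_1/2\rfloor=sn_1$.
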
    
\begin{proof}
    By Lemma \ref{bsgeqsumni} we get that $\beta_{sep}(C_{n_1}\oplus\dots\oplus~ C_{n_{2s-1}})\geq n_1+n_2+\dots+n_s=sn_1$, while $\beta_{sep}(C_{n_1}\oplus\dots\oplus C_{n_{2s-1}})\leq\lfloor\frac{2sn_1}{2}\rfloor=sn_1$ by Corollary \ref{nkappa}. So $\beta_{sep}(C_{n_1}\oplus\dots\oplus C_{n_{2s-1}})=sn_1$.
\end{proof}

\begin{theorem}\label{psci}
     Assume that $2\leq n_{2s}\mid n_{2s-1}\mid \dots\mid n_{s+2}\mid n_{s+1}=\dots=n_1$ and that the minimal prime divisor $p$ of $n_1$ divides $n_{2s}$. Then $\beta_{sep}(C_{n_1}\oplus\dots\oplus C_{n_{2s}})=sn_1+\frac{n_1}{p}$.
\end{theorem}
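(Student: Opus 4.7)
The plan is to derive the lower bound immediately from Lemma~\ref{bsgeqsumni1} applied with $r=2s$ and the prime $p$ (which divides $n_{2s}$ by hypothesis); since $n_1=\cdots=n_{s+1}$, the inequality $\beta_{sep}(G)\ge n_1+\cdots+n_s+n_{s+1}/p$ collapses to $\beta_{sep}(G)\ge sn_1+n_1/p$. The substantive work is the matching upper bound.

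For the upper bound, I would invoke Lemma~\ref{md2} to reduce to bounding the length of a group atom $\mathsf{m}$ in a block monoid $\mathcal{B}(g_1,\dots,g_k)$ with $k\le 2s+1$ and the $g_i$ distinct and nonzero. A standard reduction (project onto the support, and observe that the projection of a group atom is still a group atom in the smaller monoid) lets me assume $|\supp(\mathsf{m})|=k$. If $|\mathsf{m}|\le \max_i\ord(g_i)\le n_1$ the bound is trivial; otherwise Lemma~\ref{m*geqm}(ii) gives $2|\mathsf{m}|\le\sum_i\ord(g_i)$, which already yields $|\mathsf{m}|\le sn_1\le sn_1+n_1/p$ when $k\le 2s$.

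The essential case is $k=2s+1$, which I would handle by splitting on the number $t$ of indices with $\ord(g_i)=n_1$, using repeatedly that any proper divisor of $n_1=\exp(G)$ is at most $n_1/p$. If $t\le 2s-1$, then at least two $g_i$ have order $\le n_1/p$, and Lemma~\ref{m*geqm}(ii) forces $2|\mathsf{m}|\le(2s-1)n_1+2n_1/p$, hence $|\mathsf{m}|\le sn_1-n_1/2+n_1/p\le sn_1+n_1/p$. If $t=2s$, a permutation places the unique $g_i$ of smaller order into position $2s+1$, and Lemma~\ref{kn+gcd} in contrapositive form gives $|\mathsf{m}|\le sn_1+\ord(g_{2s+1})/p'$, where $p'\ge p$ is the smallest prime divisor of $\ord(g_{2s+1})$; since $\ord(g_{2s+1})\le n_1/p$ in this case, this bound is strictly below the target. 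If $t=2s+1$, Lemma~\ref{kn+gcd} applies directly with $p'=p$ and yields exactly $|\mathsf{m}|\le sn_1+n_1/p$.

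The heart of the argument has already been done in Lemma~\ref{kn+gcd}; what remains is a routine case split coupled with the elementary divisor estimate. The only mildly subtle step is the reduction to full support, but it is standard, so I do not foresee any genuine obstacle in assembling the proof.
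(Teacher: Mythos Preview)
Your proposal is correct and follows the paper's approach: the lower bound comes from Lemma~\ref{bsgeqsumni1}, and the upper bound combines Lemma~\ref{m*geqm}(ii) with Lemma~\ref{kn+gcd}. The paper is slightly more economical: rather than splitting into the three cases $t\le 2s-1$, $t=2s$, $t=2s+1$, it observes in one stroke that if some $\ord(g_i)<n_1$ then $\sum_i\ord(g_i)\le 2sn_1+n_1/p$, so Lemma~\ref{m*geqm}(ii) already forces $|\mathsf m|\le sn_1+n_1/(2p)<sn_1+n_1/p$; hence only the case $t=2s+1$ needs Lemma~\ref{kn+gcd}. Your invocation of Lemma~\ref{kn+gcd} in the case $t=2s$ is valid but unnecessary.
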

\begin{proof} 
By Lemma \ref{bsgeqsumni1} we have that $\beta_{sep}(C_{n_1}\oplus\dots\oplus C_{n_{2s}})\geq n_1s+\frac{n_1}{p}$. Suppose that $\beta_{sep}(C_{n_1}\oplus\dots\oplus C_{n_{2s}})>n_1s+\frac{n_1}{p}$. This implies that there exist elements $g_1,\dots,g_{2s+1}\in G$ for which there exists a group atom $\mathsf{m}=[m_1,\dots,m_{2s+1}]\in\mathcal{B}(g_1,\dots,g_{2s+1})$  with $|\mathsf{m}|> n_1s+\frac{n_1}{p}$. By Lemma \ref{m*geqm} (ii), the inequality $\ord(g_1)+\dots+\ord(g_{2s+1})\geq2|\mathsf{m}|> 2(n_1s+\frac{n_1}{p})$ also holds. This can only happen, if $\ord(g_i)=n_1$ for each $i\in\{1,\dots,2s+1\}$. So the first condition of Lemma \ref{kn+gcd} is satisfied.\par
Since $p$ is the minimal divisor of both of $n_{2s}$ and $n_1$, the condition (ii) of Lemma \ref{kn+gcd} is also satisfied. So $|\mathsf{m}|$ can not be a group atom in $\mathcal{B}(g_1,\dots,g_{2s+1})$. This contradiction implies that $\beta_{sep}(C_{n_1}\oplus\dots\oplus C_{n_{2s}})=n_1s+\frac{n_1}{p}$.
\end{proof}

\begin{proofof}{Theorem~\ref{ptlns}}
Statement (i) is the special case $n_1=\dots=n_{2s-1}=n$ of 
Theorem~\ref{ptlci}, whereas (ii) is the special case $n_1=\dots=n_{2s}=n$
of Theorem~\ref{psci}. 
\end{proofof}

\begin{remark}   
Another infinite family of abelian groups for which the exact value of the separating Noether number is known is given in \cite[Theorem 3.10]{domokos_abelian}, asserting that $\beta_{sep}(G) \leq \mathsf{d}^*(G)+1$ for all abelian groups $G$, with equality  if and only if $G$ is cyclic or $2=n_{s+1}=$ $\dots$ $= n_r$ where $r=2s-1$ or $r=2s$. Indeed, our Lemma \ref{bsgeqsumni} and Lemma \ref{bsgeqsumni1} implies that for such groups $G$, the inequality $\beta_{sep}(G)\geq \mathsf{d}^*(G)+1$ holds. 
\end{remark}

\end{document}